\documentclass{article}

\usepackage{amsmath}
\usepackage{amssymb}
\usepackage{amsthm} 
\usepackage[colorlinks=true,urlcolor=blue,linkcolor=blue,plainpages=false,pdfpagelabels]{hyperref}
\usepackage{a4wide}

\theoremstyle{plain}
\newtheorem{theorem}{Theorem}[section]
\newtheorem{corollary}[theorem]{Corollary}
\newtheorem{lemma}[theorem]{Lemma}
\newtheorem{proposition}[theorem]{Proposition}
\theoremstyle{definition}
\newtheorem{definition}[theorem]{Definition}
\newcommand{\bdfn}{\begin{definition}}
\newcommand{\edfn}{\end{definition}}
\newcommand{\bthm}{\begin{theorem}}
\newcommand{\ethm}{\end{theorem}}
\newcommand{\blem}{\begin{lemma}}
\newcommand{\elem}{\end{lemma}}
\newcommand{\bprop}{\begin{proposition}}
\newcommand{\eprop}{\end{proposition}}

\newcommand{\be}{\begin{enumerate}}
\newcommand{\ee}{\end{enumerate}}

\newcommand{\lp}{\left(}
\newcommand{\rp}{\right)}
\newcommand{\lb}{\left[}
\newcommand{\rb}{\right]}
\newcommand{\lbr}{\left\lbrace}
\newcommand{\rbr}{\right\rbrace}

\newcommand{\beq}{\begin{equation}}
\newcommand{\eeq}{\end{equation}}

\newcommand{\R}{{\mathbb R}}
\newcommand{\N}{{\mathbb N}}
\newcommand{\Ns}{\N^*}
\newcommand{\abs}[1]{\left\vert#1\right\vert}
\newcommand{\norm}[1]{\left\lVert#1 \right\rVert}
\newcommand{\ceil}[1]{\left\lceil#1\right\rceil}

\newcommand{\xn}{x_n}
\newcommand{\xnp}{x_{n + 1}}

\newcommand{\Mabd}{M_{\alpha,\beta,\delta}}
\newcommand{\Mr}{M_r}
\newcommand{\Kp}{K_p}
\newcommand{\Kpz}{K_p^{0}}

\newcommand{\remin}{\mathop{-\!\!\!\!\!\hspace*{1mm}\raisebox{0.5mm}{$\cdot$}}\nolimits}

\newcommand{\rateoned}{\sigma_1}

\newcommand{\ratetwod}{\sigma_2}
\newcommand{\ratethreed}{\sigma_3}
\newcommand{\rateonea}{\theta_1}
\newcommand{\rateoneb}{\gamma_1}
\newcommand{\ratetwob}{\gamma_2}
\newcommand{\rateoner}{\lambda_1}
\newcommand{\ratetwor}{\lambda_2}
\newcommand{\rateprod}{A}
\newcommand{\rateproddelta}{\sigma_1^*}

\title{Quantitative asymptotic regularity  and $T$-asymptotic regularity for the inexact  generalized Halpern iteration}
\author{Nicoleta Dumitru$^a$ and Lauren\c tiu Leu\c stean$^{a, b, c}$ \\ [2mm]
\footnotesize{$^a$ LOS, Faculty of Mathematics and Computer Science, University of Bucharest} \\
\footnotesize{$^b$ Simion Stoilow Institute of Mathematics of the Romanian Academy} \\
\footnotesize{$^c$ Institute for Logic and Data Science, Bucharest} \\
\footnotesize{E-mails: \href{mailto:nicoleta.dumitru@my.fmi.unibuc.ro}{nicoleta.dumitru@my.fmi.unibuc.ro}, \href{mailto:laurentiu.leustean@unibuc.ro}{laurentiu.leustean@unibuc.ro}}
}

\date{}

\begin{document}

\date{}
\maketitle

\begin{abstract}
\noindent
We apply proof mining techniques to obtain quantitative and qualitative results on asymptotic and $T$-asymptotic regularity for the inexact generalized Halpern iteration, a viscosity-type extension of an iteration recently studied by Kanzow and Shehu.  Specializing our results to the Kanzow--Shehu iteration and the sequential averaging method (SAM) yields analogous results for these iterations. Furthermore, we compute rates of ($T$-)asymptotic regularity for particular choices of the parameter sequences, and for one of them, we obtain linear rates as an application of a  lemma due to Sabach and Shtern. \\

\noindent {\em Keywords:}  Rates of ($T$-)asymptotic regularity; Generalized Halpern iteration; Nonexpansive mappings; Viscosity-type extension;  Proof mining.\\

\noindent  {\it Mathematics Subject Classification 2010}:  47H05, 47H09, 47J25, 03F10.

\end{abstract}

\section{Introduction}

Let $X$ be a Banach space, and let $T : X \to X$ be a nonexpansive mapping with fixed points. Recently, Kanzow and Shehu \cite[Section 4]{KanShe17} 
introduced the following very general iteration: 
\begin{equation}\label{KanShe-iteration}
x_0 \in X, \qquad \xnp = \delta_n u + \alpha_n \xn + \beta_n T \xn + r_n, 
\end{equation}
where $u \in X$, $(r_n) \subseteq X$ is a sequence of residuals, and $(\alpha_n)$, $(\beta_n)$, $(\delta_n)$ are sequences in $[0,1]$  satisfying 
$\alpha_n + \beta_n + \delta_n \leq 1$ for all $n \in \N$. 

In this paper, we study the asymptotic behaviour of the iteration 
\begin{equation}\label{inexact-gen-Halpern}
x_0 \in X, \qquad \xnp = \delta_n f(x_n) + \alpha_n \xn + \beta_n T \xn + r_n,
\end{equation}
which we refer to as the \textit{inexact generalized Halpern iteration}, where $f:X \to X$ is a $\rho$-contraction for some $\rho \in [0,1)$. 
If $f$ is a constant mapping, then $(\xn)$ reduces to the iteration of Kanzow and Shehu \eqref{KanShe-iteration}. Hence
the inexact generalized Halpern iteration can be viewed as a viscosity-type extension (in the sense of \cite{Att96,Mou00}) 
of the Kanzow--Shehu iteration.
The special case obtained by setting $r_n=0$ for all $n\in \N$  in \eqref{inexact-gen-Halpern} 	
will be referred to as the \textit{generalized Halpern iteration}. Moreover, if we set, for all $n \in \N$, $\alpha_n =0$, $\beta_n =1- \delta_n$, and 
$r_n=0$ in \eqref{inexact-gen-Halpern}, we obtain a viscosity version of the Halpern iteration studied by Xu \cite{Xu04} in Banach spaces and later by Sabach and Shtern \cite{SabSht17} in  Hilbert spaces, 
who called it the sequential averaging method (SAM). 

Let us recall that the sequence $(x_n)$ is said 
to be asymptotically regular if 
\[\lim\limits_{n\to \infty} \norm{x_{n+1}-x_n}=0,\] 
and $T$-asymptotically regular 
if 
\[\lim\limits_{n\to \infty} \norm{x_n-Tx_n}=0.\] 
A rate of asymptotic regularity of $(x_n)$ is a rate of convergence to $0$ of  $\lp\norm{x_{n+1}-x_n}\rp$, while a 
rate of $T$-asymptotic regularity of $(x_n)$ is a rate of convergence to $0$ of  $\lp\norm{x_n-Tx_n}\rp$. 
If $\Phi:\N\to \N$ is a rate of ($T$-)asymptotic regularity of $(x_n)$, we say that $(x_n)$ is ($T$-)asymptotically regular with rate $\Phi$. 

Our main results are Theorems \ref{thm-t-as-reg-quant-Q1} and  \ref{thm-t-as-reg-quant-Q1s}, which provide uniform rates of asymptotic regularity and $T$-asymptotic regularity for the inexact generalized Halpern iteration 
$(x_n)$, under suitable assumptions on the parameter sequences $(\alpha_n)$, $(\beta_n)$, $(\delta_n)$, $(r_n)$. These theorems are obtained by extending to 
this iteration the proof mining methods developed in \cite{Leu07a,Koh11,KohLeu12a} for the Halpern iteration. We refer, for example, to \cite{Koh08,Koh19} for further details on 
the program of proof mining. Recently, in \cite{FirLeu25a}, the authors obtained rates of  ($T$-)asymptotic regularity for the generalized Krasnoselskii-Mann-type iteration 
also studied  by Kanzow and Shehu in \cite[Section 3]{KanShe17}. However, the proof mining techniques and the quantitative hypotheses on the parameter sequences from 
\cite{FirLeu25a} differ from those used in the present paper. As an immediate consequence of our quantitative results we obtain Theorem \ref{main-thm-t-as-reg-qualitative}, 
a new asymptotic regularity and $T$-asymptotic regularity theorem for $(x_n)$. By specializing  our results to
the Kanzow--Shehu iteration and the sequential averaging method (SAM), we obtain new quantitative and qualitative ($T$-)asymptotic regularity  results for these iterations as well.

Finally, in Section \ref{section-examples}, we compute rates of ($T$-)asymptotic regularity for two particular choices of 
the parameter sequences satisfying the quantitative hypotheses of our main theorems. 
Furthermore, we give a third example for which linear rates are obtained  as an 
application of a well-known lemma on sequences of real numbers due to Sabach and Shtern \cite{SabSht17}.

\mbox{}

\noindent \textbf{Notation:} We denote $\Ns=\N\setminus \lbr 0 \rbr$. For $x\in \R$, $\lceil x \rceil$ denotes the ceiling of $x$. The set of fixed points of the mapping $T:X\to X$ is denoted by 
$\operatorname{Fix}(T)$. For all $m,n\in\N$, we set $m\remin n = \max\{m-n,0\}$. For every $g:\N\to \N$, the mapping $g^+:\N\to \N$ is defined by $g^+(n)=\max\{g(i)\mid 0\leq i\leq n \}$.

\section{Quantitative notions and lemmas}

Let $(a_n)_{n \in \N}$ be a sequence of real numbers and $\varphi : \N \to \N$. 
If $\lim\limits_{n\to \infty} a_n = a$, then 
$\varphi$ is said to be a rate of convergence of $(a_n)$ to $a$ if 
\[\forall k \in \N \, \forall n \geq \varphi(k)  \lp \abs{a_n - a} \leq \frac{1}{k+1} \rp,\]
and  $\varphi$ is a Cauchy modulus of $(a_n)$ if 
\[\forall k \in \N \; \forall n \geq \varphi(k) \, \forall l \in \N  \lp \abs{a_{n + l} - a_n} \leq \frac{1}{k+1} \rp.\]
Assume moreover that $(a_n) \subseteq [0,\infty)$. Then $\varphi$ is a Cauchy modulus of the series  $\sum\limits_{n=0}^\infty a_n$  if
\[\forall k\in \N\, \forall n\geq \varphi(k)\, \forall l\in\N^* \lp \sum\limits_{i=n+1}^{n+l} a_i\leq \frac1{k+1}\rp.\] 
If  $\sum\limits_{n=0}^\infty a_n$ is divergent, then $\varphi$ is a rate of divergence of $\sum\limits_{n=0}^\infty a_n$ if 
$\sum\limits_{i = 0}^{\varphi(n)} a_i \geq n$ for 
all $n \in \N$. 

The following quantitative lemmas will be used in the paper.

\blem \label{cauchy-mod-up-bd-conv-rate} \cite[Lemma 3.1]{FirLeu25a}\, \\
Let $(a_n)\subseteq [0,\infty)$ be  such that  $\sum\limits_{n=0}^\infty a_n$ converges with 
Cauchy modulus $\varphi$. 
Then 
\be
\item\label{cauchy-mod-up-bd-conv-rate-1} $\sum\limits_{n=0}^\infty a_n \leq M$, where $M \in \Ns$ is such that 
$M \geq \sum\limits_{i = 0}^{\varphi(0)} a_i + 1$.
\item\label{cauchy-mod-up-bd-conv-rate-2} $\lim\limits_{n\to \infty} a_n = 0$ with rate of convergence $\psi(k)=\varphi(k)+1$. 
\ee
\elem

\begin{lemma}\label{series-Cauchy-modulus-linear-comb}\cite[Lemma 2.4(iii)]{FirLeu26}\, \\
Let $(a_n), (b_n) \subseteq [0,\infty)$, $q,r\in (0,\infty)$, and define $c_n=qa_n+rb_n$ for all $n\in \N$.
Assume that $\sum\limits_{n=0}^\infty a_n$ is Cauchy with modulus $\varphi_1$ and 
$\sum\limits_{n=0}^\infty b_n$ is Cauchy with modulus $\varphi_2$.
Then $\sum\limits_{n=0}^\infty c_n$ is Cauchy with modulus $\varphi$ defined as follows:
\[\varphi:\N\to\N, \quad \varphi(k)=\max\left\{\varphi_1(\lceil 2q(k+1)\rceil-1), \varphi_2(\lceil 2r(k+1)\rceil-1)\right\}.\]
\end{lemma}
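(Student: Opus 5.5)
The plan is a direct verification from the definition of a Cauchy modulus of a series, splitting the partial sums of $(c_n)$ into their two components. Fix $k\in\N$, $n\geq \varphi(k)$ and $l\in\Ns$. By linearity,
\[\sum_{i=n+1}^{n+l} c_i = q\sum_{i=n+1}^{n+l} a_i + r\sum_{i=n+1}^{n+l} b_i .\]
It therefore suffices to show that each of the two terms on the right is at most $\frac{1}{2(k+1)}$.

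\textbf{The $a$-term.} Set $k_1=\lceil 2q(k+1)\rceil-1$. Since $q>0$ and $k+1\geq 1$, we have $2q(k+1)>0$, so $\lceil 2q(k+1)\rceil\geq 1$ and hence $k_1\in\N$; thus $\varphi_1(k_1)$ is well defined. Because $n\geq\varphi(k)\geq \varphi_1(k_1)$, the Cauchy property of $\sum a_n$ gives
\[\sum_{i=n+1}^{n+l} a_i\leq \frac{1}{k_1+1}=\frac{1}{\lceil 2q(k+1)\rceil}\leq \frac{1}{2q(k+1)}.\]
Multiplying by $q$, we get $q\sum_{i=n+1}^{n+l} a_i\leq \frac{1}{2(k+1)}$.

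\textbf{The $b$-term.} The same argument with $r$, $\varphi_2$ and $k_2=\lceil 2r(k+1)\rceil-1$ gives $r\sum_{i=n+1}^{n+l} b_i\leq \frac{1}{2(k+1)}$.

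Adding the two estimates yields $\sum_{i=n+1}^{n+l} c_i\leq \frac{1}{k+1}$, which is exactly the statement that $\varphi$ is a Cauchy modulus of $\sum c_n$.

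There is no real obstacle here. The only point needing care is that the indices $\lceil 2q(k+1)\rceil-1$ and $\lceil 2r(k+1)\rceil-1$ are natural numbers, which is why $q,r>0$ is assumed.
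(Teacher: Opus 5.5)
Your proof is correct. You split $\sum_{i=n+1}^{n+l} c_i$ into $q\sum_{i=n+1}^{n+l} a_i + r\sum_{i=n+1}^{n+l} b_i$, bound each part by $\frac{1}{2(k+1)}$ using $\lceil 2q(k+1)\rceil \geq 2q(k+1)$, and check that the indices $\lceil 2q(k+1)\rceil-1$ and $\lceil 2r(k+1)\rceil-1$ lie in $\N$. The paper gives no proof of its own here, only a citation to earlier work, and your direct verification is the natural argument behind that cited result.
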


\begin{lemma}\label{useful-Cauchy-rate}
Let $t\in [0, \infty)$, $L\in \N^*$ and define
\begin{eqnarray*}
& \varphi(k) =\ceil{t(k+1)}, \quad  & \varphi^*(k)= \ceil{t(k+1)} \remin L,  \\
& \psi(k) = \ceil{\sqrt{t(k+1)}}, \quad  & \psi^*(k)=\ceil{\sqrt{t(k+1)}} \remin L.  
\end{eqnarray*}
Then  
\be
\item\label{useful-Cauchy-rate-1} $\varphi$ and $\varphi^*$ are Cauchy moduli of $\sum\limits_{n=0}^{\infty}\frac{t}{(n+L)^2}$.
\item\label{useful-Cauchy-rate-1u} $\sum\limits_{n=0}^{\infty}\frac{t}{(n+L)^2} \leq 2\ceil{t}$.
\item\label{useful-Cauchy-rate-2} $\varphi$ and $\varphi^*$ are rates of convergence of $\lp \frac{t}{n+L}\rp$ to $0$. 
\item\label{useful-Cauchy-rate-3} $\psi$ and $\psi^*$ are rates of convergence of $\lp \frac{t}{(n+L)^2}\rp$ to $0$. 
\ee
\end{lemma}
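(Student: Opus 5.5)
Each of the four items follows from an elementary estimate: a telescoping bound for the tail sums, and direct inversion of the inequalities defining the moduli. Since $\varphi^*\leq\varphi$ and $\psi^*\leq\psi$, I prove everything first for the starred moduli, which is stronger. Throughout I assume $t>0$; for $t=0$ all sequences vanish and every function is trivially a modulus or rate.

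For \eqref{useful-Cauchy-rate-1}, I use $\frac{1}{(i+L)^2}\leq \frac{1}{(i+L-1)(i+L)}=\frac{1}{i+L-1}-\frac{1}{i+L}$ for $i\geq 1$. Telescoping gives, for all $n\in\N$ and $l\in\Ns$,
\[
\sum_{i=n+1}^{n+l}\frac{t}{(i+L)^2}\leq t\left(\frac{1}{n+L}-\frac{1}{n+l+L}\right)<\frac{t}{n+L}.
\]
So it suffices to have $n+L\geq t(k+1)$. This holds whenever $n\geq \ceil{t(k+1)}\remin L$: if $\ceil{t(k+1)}\geq L$, then $n+L\geq \ceil{t(k+1)}$; otherwise $n+L\geq L>\ceil{t(k+1)}$. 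Hence $\varphi^*$ is a Cauchy modulus, and so is $\varphi\geq\varphi^*$.

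For \eqref{useful-Cauchy-rate-1u}, I split off the first term, which is at most $t/L^2\leq t$, and bound the rest by the same telescoping estimate with $n=0$, giving at most $t/L\leq t$. The total is at most $2t\leq 2\ceil{t}$.

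For \eqref{useful-Cauchy-rate-2}, the same case analysis shows that $n\geq\varphi^*(k)$ implies $n+L\geq t(k+1)$, that is, $\frac{t}{n+L}\leq\frac1{k+1}$.

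For \eqref{useful-Cauchy-rate-3}, $n\geq \psi^*(k)$ similarly gives $n+L\geq\sqrt{t(k+1)}$. Squaring yields $(n+L)^2\geq t(k+1)$, hence $\frac{t}{(n+L)^2}\leq\frac1{k+1}$.

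There is no real obstacle. The only points needing care are the truncated subtraction $\remin$, which the two-case argument handles, and keeping the index shift $i\geq n+1$ in the telescoping step so that the bound is $\frac{t}{n+L}$ rather than something worse.
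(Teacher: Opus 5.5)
Your proof is correct, and for item (4) it is exactly the paper's argument: from $n \geq \psi^*(k)$ you get $n+L \geq \ceil{\sqrt{t(k+1)}}$, hence $\frac{t}{(n+L)^2} \leq \frac{1}{k+1}$, and $\psi \geq \psi^*$ then covers $\psi$. For items (1)--(3) the paper does not argue itself but cites (an adaptation of the proof of) a lemma from an earlier paper; your telescoping bound $\sum_{i=n+1}^{n+l} \frac{t}{(i+L)^2} < \frac{t}{n+L}$, combined with $n+L \geq t(k+1)$ for $n \geq \varphi^*(k)$, is a valid self-contained substitute and even gives the sharper bound $2t$ in item (2).
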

\begin{proof}
 \eqref{useful-Cauchy-rate-1},  \eqref{useful-Cauchy-rate-1u} and \eqref{useful-Cauchy-rate-2} are obtained by (an adaptation of  the proof of)
 \cite[Lemma 5.1]{FirLeu25a}. Let us prove \eqref{useful-Cauchy-rate-3}. As $\psi(k) \geq \psi^*(k)$ for all $k\in \N$, 
 it is enough to prove that 
 $\psi^*$ is a rate of convergence. Let $k\in\N$ and $n\geq \psi^*(k)\geq \ceil{\sqrt{t(k+1)}}-L$. We get that 
 $\frac{t}{(n+L)^2}  \leq \frac{t}{\lp\ceil{\sqrt{t(k+1)}}\rp^2} \leq \frac1{k+1}$. 
\end{proof}

\begin{proposition}\label{quant-Xu}  \, \\
Let $(s_n), (c_n)\subseteq [0,+\infty)$  and $(a_n)\subseteq [0,1]$ satisfy, for all $n\in\N$,
\[s_{n+1}\leq (1-a_n)s_n + c_n. \]
Assume that $L\in\N^*$ is an upper bound on $(s_n)$ and  $\sum\limits_{n=0}^\infty c_n$ converges 
with Cauchy modulus $\chi$.
\be
\item\label{quant-Xu-1} If $\sum\limits_{n=0}^\infty a_n$  diverges with rate of 
divergence $\theta$, then $\lim\limits_{n\to\infty} s_n=0$ with rate of convergence $\Sigma$ defined by 
\[
\Sigma(k)=\theta \lp \chi(2k+1)+1+ \ceil{\ln(2L(k+1))}\rp +1.
\]
\item\label{quant-Xu-2}  Suppose that $\rateprod:\N \times \N \to \N$ satisfies the following: 
\beq\label{quant-Xu-2-hyp}
\forall m,k \in \N \lp A(m,k)\geq m \text{ and }\prod\limits_{i=m}^{\rateprod(m,k)}(1-a_i)\leq \frac1{k+1}\rp.
\eeq
Then $\lim\limits_{n\to \infty} s_n = 0$ with rate of convergence
\[
\Sigma^*(k) = \rateprod \lp \chi(2k+1)+1, 2L(k+1)-1 \rp + 1.
\]
\ee
\end{proposition}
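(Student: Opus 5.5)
Both parts rest on the same unrolled estimate. Iterating the recursion from an index $m$ up to $n\geq m$ gives
\[
s_{n+1}\leq \prod_{i=m}^{n}(1-a_i)\, s_m+\sum_{i=m}^{n} c_i \leq L\prod_{i=m}^{n}(1-a_i)+\sum_{i=m}^{n} c_i .
\]
This follows by a routine induction on $n$, using $0\leq 1-a_i\leq 1$ to bound each partial product of the $(1-a_j)$ multiplying $c_i$ by $1$. Given $k$, we choose $m=\chi(2k+1)+1$. Since $\chi$ is a Cauchy modulus of $\sum c_n$, we have $\sum_{i=\chi(2k+1)+1}^{\chi(2k+1)+l} c_i\leq \frac{1}{2(k+1)}$ for all $l\in\Ns$. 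Hence the tail $\sum_{i=m}^{n}c_i$ is at most $\frac{1}{2(k+1)}$ for every $n\geq m$. It then remains to make $L\prod_{i=m}^n(1-a_i)\leq \frac{1}{2(k+1)}$ for all $n$ beyond some bound $N$, and then $s_{n+1}\leq\frac1{k+1}$ for $n\geq N$, i.e. the rate is $N+1$.

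For \eqref{quant-Xu-2} we take $N=\rateprod(m,2L(k+1)-1)$. The hypothesis \eqref{quant-Xu-2-hyp} gives $N\geq m$ and $\prod_{i=m}^{N}(1-a_i)\leq\frac{1}{2L(k+1)}$. The product is nonincreasing in $n$ because its factors lie in $[0,1]$, so the same bound holds for every $n\geq N$. This yields $\Sigma^*(k)=N+1$ directly.

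For \eqref{quant-Xu-1} we reduce to \eqref{quant-Xu-2} by exhibiting a suitable $\rateprod$. The plan uses $1-x\leq e^{-x}$, which gives $\prod_{i=m}^{n}(1-a_i)\leq \exp\lp-\sum_{i=m}^n a_i\rp$. Set $N=\theta(m+\ceil{\ln(2L(k+1))})$. Since $a_i\leq 1$ gives $\sum_{i=0}^{m-1}a_i\leq m$, and $\theta$ is a rate of divergence, we get
\[
\sum_{i=m}^{N}a_i\geq \sum_{i=0}^{N}a_i - m\geq \ceil{\ln(2L(k+1))}.
\]
Consequently the product is at most $\frac{1}{2L(k+1)}$, and again this persists for $n\geq N$ by monotonicity.

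We also need $N\geq m$. This holds when the argument $P=m+\ceil{\ln(2L(k+1))}$ is positive, since $\sum_{i=0}^{\theta(P)}a_i\geq P$ together with $a_i\leq 1$ forces $\theta(P)+1\geq P$, i.e. $\theta(P)\geq m+\ceil{\ln(2L(k+1))}-1\geq m$ (here $\ceil{\ln(2L(k+1))}\geq 1$ because $2L(k+1)\geq 2$). This gives $\Sigma(k)=N+1$, matching the statement. The only delicate points are this index bookkeeping, namely checking $N\geq m$ and that the Cauchy-modulus tail starts exactly at index $\chi(2k+1)+1$; the rest is routine.
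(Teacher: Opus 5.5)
Your argument is correct and self-contained, whereas the paper gives no proof of its own here. It quotes part (1) from \cite[Proposition 2.7]{LeuPin24}, and it obtains part (2) by taking $\varepsilon=\frac{1}{k+1}$ in \cite[Lemma 2.10(1)]{DinPin23}.

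You instead derive both parts from the single unrolled inequality $s_{n+1}\leq L\prod_{i=m}^{n}(1-a_i)+\sum_{i=m}^{n}c_i$ with $m=\chi(2k+1)+1$. You then get part (1) from the same mechanism as part (2), using $1-x\leq e^{-x}$ together with $\sum_{i=0}^{m-1}a_i\leq m$. Your route makes clear exactly where $a_i\leq 1$ and $L\geq 1$ are used. The latter gives $\lceil\ln(2L(k+1))\rceil\geq 1$, which yields $\theta(P)\geq P-1\geq m$. It also shows that the two rates have the same shape and differ only in how the product bound is certified. The citation route buys brevity and keeps the statement aligned with the existing literature.

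One wording point: you say you reduce part (1) to part (2) ``by exhibiting a suitable $A$''. The natural choice $A(m,k)=\theta(m+\lceil\ln(k+1)\rceil)$ can violate $A(m,k)\geq m$ at $k=0$; for example, with $a_i\equiv 1$ and $\theta(n)=n\remin 1$ one gets $A(m,0)=m-1$. This does not affect correctness, since your argument only uses the single instance $(m,2L(k+1)-1)$, and you verify that instance directly.
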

\begin{proof}
\be
\item By \cite[Proposition 2.7]{LeuPin24}.
\item Take $\varepsilon = \frac1{k+1}$ in \cite[Lemma 2.10.(1)]{DinPin23}.
\ee
\end{proof}

Proposition \ref{quant-Xu} is a quantitative version of Xu's lemma \cite{Xu02}, a very useful tool in the study of the asymptotic behaviour of nonlinear iterations. 
Quantitative versions of Xu's lemma related to the convergence to $0$ of the product $\prod\limits_{n=0}^\infty (1-a_n)=0$ were first applied by Kohlenbach \cite{Koh11}
and by Kohlenbach and the second author \cite{KohLeu12a} in the study of the Halpern iteration. 
A mapping $\rateprod:\N \times \N \to \N$ satisfying \eqref{quant-Xu-2-hyp} was first considered 
by Kohlenbach \cite[Lemma 10]{Koh20a} and later used in \cite{Pin21,KohPin22,DinPin23}.

\section{A generalized Halpern iteration}\label{gemMH-iteration}

In the sequel, $X$ is a normed space, $T:X \to X$ is a nonexpansive mapping with fixed points,  
$f:X \to X$ is a $\rho$-contraction for some $\rho \in [0,1)$, $(r_n)_{n\in \N}$ is a sequence in $X$, and 
$(\alpha_n)_{n\in \N}$, $(\beta_n)_{n\in \N}$, $(\delta_n)_{n\in \N}$ are sequences in $[0,1]$ such that for all $n\in \N$, 
\[\alpha_n + \beta_n + \delta_n \leq 1. \]

Let $(\xn)$ be the inexact generalized Halpern iteration, defined by \eqref{inexact-gen-Halpern}: 
\[ x_0 \in X, \qquad \xnp = \delta_n f(x_n) + \alpha_n \xn + \beta_n T \xn + r_n. \]

\blem
For all $n\in \N$ and $p \in \operatorname{Fix}(T)$,
\begin{align}
\norm{\xnp - p} & \leq \delta_n \norm{f(\xn) - p} + (1 - \delta_n) \norm{\xn - p} + 
(1 - \alpha_n - \beta_n - \delta_n)\norm{p} + \norm{r_n}. \label{xn-p-ineq-viscosity}
\end{align}
\elem
\begin{proof}
Let $n \in \N$ and $p \in \operatorname{Fix}(T)$. We get that 
\begin{align*}
\norm{\xnp - p} & = 
\norm{\delta_n f(\xn) + \alpha_n \xn + \beta_n T \xn + r_n - p} \\
&  = \norm{\delta_n (f(\xn) - p) + \alpha_n (\xn - p) + \beta_n (T \xn - p) + r_n + (\alpha_n + \beta_n + \delta_n - 1) p} \\
& \leq  \delta_n \norm{f(\xn) - p} + \alpha_n \norm{\xn - p} + \beta_n \norm{T \xn - p} + \norm{r_n} + 
(1 - \alpha_n - \beta_n - \delta_n)\norm{p} \\
& \leq \delta_n \norm{f(\xn) - p} + (\alpha_n + \beta_n) \norm{\xn - p} + \norm{r_n} + (1 - \alpha_n - \beta_n - \delta_n)\norm{p}  \\
& \quad \text{since $\norm{T \xn - p}=\norm{T \xn - Tp} \leq \norm{\xn - p}$, as $p \in \operatorname{Fix}(T)$ and $T$ is nonexpasive}\\
& \leq \delta_n \norm{f(\xn) - p} + (1 - \delta_n) \norm{\xn - p} + (1 - \alpha_n - \beta_n - \delta_n)\norm{p} + \norm{r_n}.
\end{align*}
\end{proof}

Let $p\in \operatorname{Fix}(T)$ and define the sequence $(K_p^{n})$ of non-negative real numbers similar to that in
\cite[Section 4]{FirLeu25}:
\begin{equation} \label{def-kzp-knp}
K_p^{0} = \max \lbr \norm{x_0 - p}, \frac{\norm{f(p) - p}}{1 - \rho}, \norm{p} \rbr, \quad
K_p^{n+1} = K_p^{n} + (1 - \alpha_n - \beta_n - \delta_n)\norm{p} + \norm{r_n}. 
\end{equation}

\blem\label{upper-bds}
For all $n \in \N$, 
\[\norm{\xn - p}, \norm{f(\xn) - p} \leq K_p^n.\]
\elem
\begin{proof}
The proof is by induction on $n$.

$n = 0$: We have that, by \eqref{def-kzp-knp}, $\norm{x_0 - p} \leq \Kpz$, and, since $f$ is a 
$\rho-$contraction,
\[\norm{f(x_0) - p}\leq \norm{f(x_0) - f(p)} + \norm{f(p) - p} \leq \rho \norm{x_0 - p} + \norm{f(p) - p} \stackrel{\eqref{def-kzp-knp}}{\leq} \Kpz.\]
$n \Rightarrow n + 1$: Use \eqref{xn-p-ineq-viscosity}, the induction hypothesis, and \eqref{def-kzp-knp} to get that
\begin{align*}
\norm{\xnp - p} & \leq \delta_n \norm{f(\xn) - p} + (1 - \delta_n) \norm{\xn - p} + (1 - \alpha_n - \beta_n - \delta_n)\norm{p} + \norm{r_n} \\
& \leq  K_p^n + (1 - \alpha_n - \beta_n - \delta_n)\norm{p} + \norm{r_n} = K_p^{n+1}.
\end{align*}
Then $\norm{f(\xnp) - p} \leq K_p^{n+1}$ follows easily.
\end{proof}

\subsection{Main technical lemma}

\blem
For all $n\in \N$, 
\begin{align}
\norm{x_{n+2} - x_{n+1}} & \le (1-(1-\rho)\delta_{n+1})\norm{x_{n+1} - x_n} +
 |\delta_{n+1}-\delta_n|\norm{f(x_n)} + |\alpha_{n+1}-\alpha_n|\norm{x_n} \label{ineq-rec-xnp-xn}\\
& \quad  +|\beta_{n+1}-\beta_n|\norm{Tx_n} + \norm{r_{n+1}-r_n} \nonumber, \\
\norm{x_n - Tx_n } & \leq \norm{x_{n+1}-x_n} + (1-\beta_n)\norm{Tx_n} + \alpha_n \norm{x_n} 
+ \delta_n \norm{f(x_n)} + \norm{r_n} \label{ineq-Txn-xn-xnp-xn}.
\end{align}
\elem
\begin{proof}
First note that
\begin{align*}
x_{n+2} - x_{n+1} & = (\delta_{n+1} f(x_{n+1}) + \alpha_{n+1} x_{n+1} + \beta_{n+1} T x_{n+1} + 
r_{n+1}) \\ 
& \quad - (\delta_n f(x_n) + \alpha_n x_n + \beta_n Tx_n+ r_n) \\
& = \delta_{n+1}(f(x_{n+1})-f(x_n))+  (\delta_{n+1}-\delta_n)f(x_n)+  
\alpha_{n+1}(x_{n+1}-x_n)   + (\alpha_{n+1}-\alpha_n)x_n \\
 & \quad  +  \beta_{n+1}(Tx_{n+1} - Tx_n) + (\beta_{n+1}-\beta_n)Tx_n + (r_{n+1}-r_n).
\end{align*}
It follows that 
\begin{align*}
\norm{x_{n+2} - x_{n+1}} & \le \delta_{n+1}\norm{f(x_{n+1})-f(x_n)} +
|\delta_{n+1}-\delta_n|\norm{f(x_n)} + |\alpha_{n+1}-\alpha_n|\norm{x_n}  \\
& \quad + \alpha_{n+1}\norm{x_{n+1}-x_n} + \beta_{n+1}\norm{Tx_{n+1} - Tx_n} + |\beta_{n+1}-\beta_n|\norm{Tx_n} \\
& \quad + \norm{r_{n+1}-r_n} \\
& \leq  (\delta_{n+1}\rho+\alpha_{n+1}+\beta_{n+1})\norm{x_{n+1} - x_n} 
+ |\delta_{n+1}-\delta_n|\norm{f(x_n)} + |\alpha_{n+1}-\alpha_n|\norm{x_n}  \\
& \quad  +|\beta_{n+1}-\beta_n|\norm{Tx_n} + \norm{r_{n+1}-r_n}\\
& \leq (\delta_{n+1}\rho+1-\delta_{n+1})\norm{x_{n+1} - x_n} 
+ |\delta_{n+1}-\delta_n|\norm{f(x_n)} + |\alpha_{n+1}-\alpha_n|\norm{x_n}  \\
& \quad  +|\beta_{n+1}-\beta_n|\norm{Tx_n} + \norm{r_{n+1}-r_n}, 
\end{align*}
since $\alpha_{n+1} + \beta_{n+1} + \delta_{n+1} \leq 1$, so 
$\alpha_{n+1} + \beta_{n+1} \leq 1- \delta_{n+1}$.
Thus, \eqref{ineq-rec-xnp-xn} holds. \\
Furthermore, we have that 
\begin{align*}
Tx_n - x_n & = (x_{n+1}-x_n) + Tx_n-x_{n+1} = (x_{n+1}-x_n) + Tx_n-(\delta_n f(x_n) + \alpha_nx_n + \beta_n Tx_n+ r_n) \\
& = (x_{n+1}-x_n) + (1-\beta_n)Tx_n - \alpha_n x_n - \delta_n f(x_n)-r_n,
\end{align*}
hence \eqref{ineq-Txn-xn-xnp-xn} follows easily. 
\end{proof}

\subsection{Quantitative hypotheses} \label{quantitative-hypotheses}

The following quantitative hypotheses are considered for the sequences $(\alpha_n)$, $(\beta_n)$, $(\delta_n)$, $(r_n)$:\\

\begin{tabular}{ll}
(Q0) & \quad $\Mabd \in \N$ is an upper bound on $\sum\limits_{n=0}^\infty \lp 1 - \alpha_n - \beta_n - \delta_n \rp$;\\[1mm]
(Q1$\delta_n$) & \quad $\sum\limits_{n=0}^\infty \delta_n = \infty$ with rate of divergence $\rateoned$; \\[1mm]
(Q1$^*\delta_n$) &\quad $\rateproddelta:\N \times \N \to \N$ satisfies the following: for all $m,k \in \N$, \\[1mm]
& \quad $\rateproddelta(m,k) \geq m$ and $\prod\limits_{i=m}^{\rateproddelta(m,k)}(1-(1-\rho)\delta_{i+1})\leq \frac1{k+1}$ ;\\[1mm]
(Q2$\delta_n$) & \quad $\sum\limits_{n=0}^{\infty} \abs{\delta_n-\delta_{n+1}} <\infty$ with Cauchy modulus $\ratetwod$;\\[1mm]
(Q3$\delta_n$) & \quad  $\lim\limits_{n\to\infty} \delta_n=0$ with rate of convergence $\ratethreed$;\\[1mm]
(Q1$\alpha_n$) & \quad $\sum\limits_{n=0}^{\infty} \abs{\alpha_n-\alpha_{n+1}} <\infty$ with Cauchy modulus $\rateonea$;\\[1mm]
(Q1$\beta_n$) & \quad $\sum\limits_{n=0}^{\infty} \abs{\beta_n-\beta_{n+1}} <\infty$ with Cauchy modulus $\rateoneb$;\\[1mm]
(Q2$\beta_n$) & \quad $\lim\limits_{n\to\infty} \beta_n=1$ with rate of convergence $\ratetwob$;\\[1mm]
(Q1$r_n$) & \quad $\sum\limits_{n=0}^{\infty} \norm{r_n - r_{n+1}} <\infty$ with Cauchy modulus $\rateoner$;\\[1mm]
(Q2$r_n$) & \quad $\sum\limits_{n=0}^{\infty} \norm{r_n} <\infty$ with Cauchy modulus $\ratetwor$.
\end{tabular}

\mbox{}

\begin{lemma}\label{H1deltan-prop}
Assume that (Q1$\delta_n$) holds. Then 
\be
\item\label{H1deltan-prop-1} $\rateoned(n) \geq n-1$ for all $n \in \N$. 
\item\label{H1deltan-prop-2} $\sum\limits_{n=0}^\infty (1-\rho)\delta_{n+1}$ diverges with rate 
$\theta(n)=\rateoned^+\lp\ceil{\frac{n}{1-\rho}}+1\rp\remin 1$.
\ee
\end{lemma}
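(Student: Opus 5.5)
Part \eqref{H1deltan-prop-1} follows directly from the definition of a rate of divergence. Fix $n\in\N$. Since each $\delta_i\in[0,1]$, we have
\[n\leq \sum_{i=0}^{\rateoned(n)}\delta_i\leq \rateoned(n)+1,\]
so $\rateoned(n)\geq n-1$. No further hypotheses are needed.

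For part \eqref{H1deltan-prop-2}, fix $n\in\N$ and set $N=\ceil{\frac{n}{1-\rho}}+1$. We must show
\[\sum_{i=0}^{\theta(n)}(1-\rho)\delta_{i+1}\geq n.\]
The plan is to use $\rateoned^+$ in place of $\rateoned$. Since $\rateoned^+(N)\geq\rateoned(N)$ and the partial sums are monotone, we get $\sum_{i=0}^{\rateoned^+(N)}\delta_i\geq N$. By part \eqref{H1deltan-prop-1}, $\rateoned^+(N)\geq \rateoned(N)\geq N-1\geq 1$, so $\rateoned^+(N)\remin 1=\rateoned^+(N)-1$ is an honest subtraction. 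Reindexing then gives
\[\sum_{i=0}^{\theta(n)}\delta_{i+1}=\sum_{j=1}^{\rateoned^+(N)}\delta_j=\sum_{j=0}^{\rateoned^+(N)}\delta_j-\delta_0\geq N-1.\]
Here we dropped the term $\delta_0\leq 1$, and this loss is exactly why the $+1$ appears in the definition of $N$. Multiplying by $1-\rho>0$ yields
\[\sum_{i=0}^{\theta(n)}(1-\rho)\delta_{i+1}\geq (1-\rho)(N-1)=(1-\rho)\ceil{\frac{n}{1-\rho}}\geq n.\]

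The only delicate point is making sure the truncated subtraction $\remin$ does not cut off, which is handled by part \eqref{H1deltan-prop-1}. Using $\rateoned^+$ is not strictly needed for this estimate, since $\rateoned$ itself works. It is presumably there to make $\theta$ monotone for later use, so the argument should only rely on $\rateoned^+\geq\rateoned$.
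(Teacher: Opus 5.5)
Your argument is correct. The paper gives no computation here and simply cites Lemma 2.3(i) and Lemma 3.5(iii) of \cite{FirLeu26}; your direct verification is a self-contained proof of the two facts it imports. You bound the partial sums by $\rateoned(n)+1$, then drop $\delta_0\leq 1$ after the index shift and use $(1-\rho)\ceil{\frac{n}{1-\rho}}\geq n$. You are also right that $\rateoned$ would work in place of $\rateoned^+$.

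One small slip: $N-1=\ceil{\frac{n}{1-\rho}}\geq 1$ fails for $n=0$, so $\rateoned^+(N)\remin 1$ can truncate there. This is harmless, since $\sum_{i=0}^{\theta(n)}\delta_{i+1}\geq\sum_{j=1}^{\rateoned^+(N)}\delta_j$ holds whether or not truncation occurs. For $n=0$ the target bound $\geq 0$ is trivial anyway.
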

\begin{proof}
\eqref{H1deltan-prop-1} is \cite[Lemma 2.3(i)]{FirLeu26}, as $(\delta_n) \subseteq [0,1]$. Furthermore, \eqref{H1deltan-prop-2} is
\cite[Lemma 3.5.(iii)]{FirLeu26}.
\end{proof}

For the rest of the section, we assume that (Q0) and (Q2$r_n$) hold. Let  $\Mr \in \N$ satisfy the following: 
\begin{center}
$\Mr=0$ if $ r_n = 0$ for all $n \in \N$, and $\Mr$ is an upper bound  on $\sum\limits_{n = 0}^{\infty}\norm{r_n}$ otherwise.
\end{center}
By Lemma \ref{cauchy-mod-up-bd-conv-rate}.\eqref{cauchy-mod-up-bd-conv-rate-1}, such an $\Mr$ can be computed using $\ratetwor$.

For all $p \in \operatorname{Fix}(T)$, denote
\begin{equation}\label{def-Kp}
\Kp = \ceil{(2 + \Mabd) \Kpz} + \Mr+1. 
\end{equation}

\blem \label{explicit-upper-b-xn-txn}
For all  $p \in \operatorname{Fix}(T)$, $n \in \N$, the following hold:
\be
\item\label{Kpn-Mr-MAbd} $K_p^n \leq (1 + \Mabd)\Kpz + \Mr \leq \Kp$;
\item\label{upper-bound-xnpup}
$\norm{\xn - p}, \norm{f(\xn) - p}, \norm{T\xn - p} \leq (1 + \Mabd)\Kpz + \Mr$;
\item\label{bd-xn-txn} $\norm{\xn}, \norm{T\xn}, \norm{f(\xn)} \leq \Kp$;
\item\label{bd-xnpxn} $\norm{\xn - f(\xn)}, \norm{\xnp - \xn}\leq 2\Kp$.
\ee
\elem
\begin{proof}
\be
\item The case $n=0$ is obvious. For $n\in \Ns$ we get that 
\begin{align*}
K_p^n & \stackrel{\eqref{def-kzp-knp}}{=} \Kpz + \sum\limits_{i = 0}^{n - 1} (1 - \alpha_i - \beta_i - \delta_i) \norm{p} + 
\sum\limits_{i = 0}^{n - 1} \norm{r_i} \leq \Kpz + \norm{p} \Mabd + \Mr \\
& \stackrel{\eqref{def-kzp-knp}}{\leq}  (1 + \Mabd)\Kpz + \Mr.
\end{align*}
\item Apply Lemma \ref{upper-bds} and \eqref{Kpn-Mr-MAbd} to get that 
$\norm{\xn - p}, \norm{f(\xn) - p} \leq (1 + \Mabd)\Kpz + \Mr$. Furthermore, 
$\norm{T\xn - p} \leq \norm{\xn - p} \leq (1 + \Mabd)\Kpz + \Mr$.
\item Apply \eqref{upper-bound-xnpup} and the fact that, by  \eqref{def-kzp-knp}, $\norm{p} \leq \Kpz$.
\item By  \eqref{bd-xn-txn}.
\ee
\end{proof}

\blem
For all $p \in \operatorname{Fix}(T)$, $n \in \N$, 
\begin{align}
\norm{x_{n+2} - x_{n+1}} & \leq (1-(1-\rho)\delta_{n+1})\norm{x_{n+1} - x_n} + 
\Kp d_n  + \norm{r_{n+1}-r_n}, \label{xnpuxn-ineq-main}\\
\norm{Tx_n - x_n } & \leq \norm{x_{n+1}-x_n} + 2  \Kp\lp 1-\beta_n\rp + \norm{r_n}. \label{xnTxn-ineq-main}
\end{align}
where $d_n=|\alpha_{n+1}-\alpha_n| + |\beta_{n+1}-\beta_n| + |\delta_{n+1}-\delta_n|$. 
\elem
\begin{proof}
Apply \eqref{ineq-rec-xnp-xn},  \eqref{ineq-Txn-xn-xnp-xn}, and Lemma \ref{explicit-upper-b-xn-txn}.\eqref{bd-xn-txn}.
\end{proof}

\section{Main theorems}

The main quantitative results of the paper are Theorems \ref{thm-t-as-reg-quant-Q1} and  \ref{thm-t-as-reg-quant-Q1s}, which provide 
uniform rates of asymptotic regularity and $T$-asymptotic regularity for  the inexact generalized Halpern iteration $(x_n)$.
These rates are computed for general parameter sequences $(\alpha_n)$, $(\beta_n)$, $(\delta_n)$, and $(r_n)$ satisfying suitable 
quantitative hypotheses. The main qualitative result is Theorem \ref{main-thm-t-as-reg-qualitative}, which proves that $(x_n)$ is asymptotically 
regular and $T$-asymptotically regular. This result is obtained as an immediate corollary of the quantitative theorems.
Furthermore, as consequences of our main results, we obtain both quantitative and qualitative results for the Kanzow--Shehu iteration 
defined by \eqref{KanShe-iteration} and for the sequential averaging method (SAM). 

In the following, let $X$  be a normed space, $T:X \to X$ a nonexpansive mapping with fixed points, 
$f:X \to X$ a $\rho$-contraction for some $\rho \in [0,1)$, $(r_n)$ a sequence in $X$, 
$(\alpha_n)$, $(\beta_n)$, $(\delta_n)$  sequences in $[0,1]$, and let $(\xn)$ be the inexact 
generalized Halpern iteration defined by \eqref{inexact-gen-Halpern}.

\bthm \label{thm-t-as-reg-quant-Q1}
Assume that
\be
\item  (Q0), (Q1$\delta_n$), (Q2$\delta_n$), (Q1$\alpha_n$), (Q1$\beta_n$), (Q1$r_n$), and (Q2$r_n$) hold;
\item  $K_p$ is defined by \eqref{def-Kp} for some $p\in \operatorname{Fix}(T)$. 
\ee 
Define $\chi: \N\to \N$ by 
\begin{equation}\label{thm-t-as-reg-quant-def-chi}
\chi(k) = \max\lbr\ratetwod(6\Kp(k+1)-1), \rateonea(6\Kp(k+1)-1),\rateoneb(6\Kp(k+1)-1), 
\rateoner(2k+1)\rbr.
\end{equation}
Then
\be
\item\label{thm-t-as-reg-quant-Q1-1} $(\xn)$ is asymptotically regular with rate
\begin{equation}
\Phi(k) = \rateoned^+\lp \ceil{\frac{\chi(2k+1)+1+ \ceil{\ln(4\Kp(k+1))}}{1-\rho}}+1\rp.
\end{equation}

\item\label{thm-t-as-reg-quant-Q1-2} If, furthermore, (Q2$\beta_n$) holds, 
then  $(\xn)$ is $T$-asymptotically regular with rate
\begin{equation}
\Psi(k) = \max \lbr \Phi(3k+2), \ratetwob\lp 6\Kp(k+1) - 1 \rp, \ratetwor(3k+2) + 1 \rbr.
\end{equation}
\ee
\ethm
\begin{proof}
\be
\item Let us denote $L:=2\Kp$ and, for all $n\in\N$, 
\[ s_n := \norm{\xnp - \xn},  \qquad  a_n := (1-\rho)\delta_{n+1}, \qquad 
c_n := \Kp d_n + \norm{r_{n+1}-r_n}, 
\]
where
$d_n:= |\alpha_{n+1}-\alpha_n| + |\beta_{n+1}-\beta_n|+|\delta_{n+1}-\delta_n|$. \\
We have that $s_{n+1}\leq (1-a_n)s_n + c_n$ for all  $n\in\N$ (by \eqref{xnpuxn-ineq-main}) and that 
$L$ is an upper bound on $(s_n)$ (by Lemma \ref{explicit-upper-b-xn-txn}.\eqref{bd-xnpxn}).
Apply (Q1$\delta_n$) and Lemma \ref{H1deltan-prop}.\eqref{H1deltan-prop-2} to get that 
$\sum\limits_{n=0}^\infty a_n$ diverges with rate 
\[\theta(n)=\rateoned^+\lp\ceil{\frac{n}{1-\rho}}+1\rp\remin 1.\]
Furthermore, one can easily see that $\sum\limits_{n=0}^\infty d_n$ is Cauchy with modulus
\[\varphi(k)= \max \lbr \ratetwod(3k+2), \rateonea(3k+2), \rateoneb(3k+2)\rbr.\]
Hence, we get by Lemma \ref{series-Cauchy-modulus-linear-comb}, that $\sum\limits_{n=0}^\infty c_n$ is Cauchy with modulus $\chi$ 
given by \eqref{thm-t-as-reg-quant-def-chi}.

Apply now Proposition \ref{quant-Xu}.\eqref{quant-Xu-1} to obtain that $(\xn)$ is 
asymptotically regular with rate
\begin{align*}
\Sigma(k) & = \lp \rateoned^+ \lp \ceil{\frac{\chi(2k+1)+1+\lceil \ln(4\Kp(k+1))\rceil}{1-\rho}}+1\rp \remin 1 \rp + 1 = \lp  \Phi(k)  \remin 1 \rp + 1 \\
 & = \Phi(k), \quad \text{since,  by Lemma \ref{H1deltan-prop}.\eqref{H1deltan-prop-1}, } \Phi(k) \geq 1.
\end{align*}

\item First, let us remark that by (Q2$r_n$) and 
Lemma \ref{cauchy-mod-up-bd-conv-rate}.\eqref{cauchy-mod-up-bd-conv-rate-2}, we get that 
\begin{center}
($\ast$) \quad $\lim\limits_{n\to \infty} \norm{r_n} = 0$ with rate of convergence $\psi(k)=\ratetwor(k)+1$.
\end{center}
Let $n \geq \Psi(k)$. Then, by \eqref{thm-t-as-reg-quant-Q1-1}, (Q2$\beta_n$), and ($\ast$), we get that 
\[\norm{x_{n+1}-x_n} \leq \frac1{3(k + 1)}, \quad 1-\beta_n \leq \frac1{6\Kp (k+1)}, 
\quad \norm{r_n} \leq \frac1{3(k + 1)}.\]
\ee
It follows that 
\begin{align*}
\norm{Tx_n - x_n } & \stackrel{\eqref{xnTxn-ineq-main}}{\leq} \norm{x_{n+1}-x_n} + 2 \Kp
\lp 1-\beta_n\rp  + \norm{r_n}  \leq  \frac{1}{k+1}.
\end{align*}
\end{proof}

\bthm \label{thm-t-as-reg-quant-Q1s}
In the hypotheses of Theorem \ref{thm-t-as-reg-quant-Q1}, suppose that (Q1$^*\delta_n$) holds  	
instead of (Q1$\delta_n$). Then 
\be
\item\label{thm-t-as-reg-quant-Q1s-1} $(\xn)$ is asymptotically regular with rate
\begin{equation}
\Phi^*(k)=\rateproddelta \lp\chi(2k+1)+1, 4\Kp(k+1)-1 \rp + 1.
\end{equation}
\item\label{thm-t-as-reg-quant-Q1s-2}  If, furthermore, (Q2$\beta_n$) holds, 
then  $(\xn)$ is $T$-asymptotically regular with rate
\begin{equation}
\Psi^*(k)= \max \lbr \Phi^*(3k+2), \ratetwob\lp 6\Kp (k+1) - 1 \rp, \ratetwor(3k+2) + 1 \rbr.
\end{equation}
\ee
\ethm
\begin{proof} 
\eqref{thm-t-as-reg-quant-Q1s-1} Follow the proof of Theorem \ref{thm-t-as-reg-quant-Q1}.\eqref{thm-t-as-reg-quant-Q1-1} and apply 
Proposition \ref{quant-Xu}.\eqref{quant-Xu-2} instead of
Proposition \ref{quant-Xu}.\eqref{quant-Xu-1}. \\
\eqref{thm-t-as-reg-quant-Q1s-2} Replace $\Phi$ with with $\Phi^*$in the proof of 
Theorem \ref{thm-t-as-reg-quant-Q1}.\eqref{thm-t-as-reg-quant-Q1-2}.
\end{proof}

Thus, the rates $\Phi$, $\Phi^*$, $\Psi$, $\Psi^*$ depend on $X$, $T$, $f$ only via $\Kpz$ 
given by \eqref{def-kzp-knp}, and on the parameter sequences $(\alpha_n)$, $(\beta_n)$, $(\delta_n)$, and $(r_n)$ through the 
moduli appearing in the quantitative hypotheses they satisfy. In particular, if all these moduli are polynomial, 
then the rates $\Phi$, $\Phi^*$, $\Psi$, $\Psi^*$ are also polynomial. 

Our main theorems are quantitative ($T$-)asymptotic regularity results for the inexact generalized Halpern iteration $(x_n)$.
As an immediate consequence, we obtain a qualitative ($T$-)asymptotic regularity result for this iteration.

\bthm\label{main-thm-t-as-reg-qualitative}
Assume that the following hold:
\be
\item The series $\sum\limits_{n=0}^\infty \lp 1 - \alpha_n - \beta_n - \delta_n \rp$, \, 
$\sum\limits_{n=0}^{\infty} \abs{\delta_n-\delta_{n+1}}$, \, 
$\sum\limits_{n=0}^{\infty} \abs{\alpha_n-\alpha_{n+1}}$,  \, $\sum\limits_{n=0}^{\infty} 
\abs{\beta_n-\beta_{n+1}}$,  \, \\
$\sum\limits_{n=0}^{\infty} \norm{r_n}$,  \, $\sum\limits_{n=0}^{\infty} \norm{r_n - r_{n+1}}$
 converge;
\item $\lim\limits_{n\to\infty} \beta_n=1$; 
\item $\lp \sum\limits_{n=0}^\infty \delta_n \text{ diverges} \rp$ or 
$\lp \text{for all }m\in \N, \, 
\prod\limits_{i=m}^{\infty}(1-(1-\rho)\delta_{i+1})=0\rp$.
\ee
Then $(x_n)$ is asymptotically regular and $T$-asymptotically regular, that is 
\[\lim\limits_{n\to\infty} \norm{x_n-x_{n+1}} = \lim\limits_{n\to\infty} \norm{x_n-Tx_n}=0. \]
\ethm

We conclude this section by presenting some consequences of the main theorems, computing 
rates for special cases of the inexact generalized Halpern iteration $(x_n)$.

\subsection{Kanzow--Shehu iteration given by \eqref{KanShe-iteration}}

If we let $f(x)=u\in X$ for all $x\in X$, so that $\rho =0$, then $(x_n)$ reduces to the 
Kanzow--Shehu iteration 
\eqref{KanShe-iteration}:
\[\xnp = \delta_n u + \alpha_n \xn + \beta_n T \xn + r_n.\]
Then $\Kp$ is defined by \eqref{def-Kp} with $\Kpz=\max \lbr \norm{x_0 - p}, \norm{u - p}, \norm{p} \rbr$. 
Theorem \ref{thm-t-as-reg-quant-Q1} holds with 
\[\Phi(k) = \rateoned^+\lp \chi(2k+1)+\ceil{\ln(4\Kp(k+1))}+2\rp,\]
and Theorem  \ref{thm-t-as-reg-quant-Q1s} holds with $\rateproddelta$ satisfying for all $m,k\in \N$, 
\[
\rateproddelta(m,k) \geq m \text{ and }\prod\limits_{i=m}^{\rateproddelta(m,k)}(1-\delta_{i+1})\leq \frac{1}{k+1}.\]

Furthermore, in the hypotheses of Theorem \ref{main-thm-t-as-reg-qualitative}, one takes 
\begin{center}
$\lp \text{for all }m\in \N, \, \prod\limits_{i=m}^{\infty}(1-\delta_{i+1})=0\rp$ instead of $\lp \text{for all }m\in \N, \, 
\prod\limits_{i=m}^{\infty}(1-(1-\rho)\delta_{i+1})=0\rp$.
\end{center}

By letting $\rho=0$ in Theorem \ref{main-thm-t-as-reg-qualitative}, we get a ($T$-)asymptotic regularity 
result for this iteration.

\subsection{The generalized Halpern iteration}

Suppose that $r_n = 0$ for all $n \in \N$, hence 
\[\xnp = \delta_n f(x_n) + \alpha_n \xn + \beta_n T\xn \quad \text{for all } n \in \N.\]
Then  $\Mr = 0$, so $\Kp = \ceil{(2 + \Mabd) \Kpz}+1$. 
As (Q1$r_n$), (Q2$r_n$) are satisfied with $\rateoner(k)= \ratetwor(k)=0$ for all $k \in \N$, we get that Theorems \ref{thm-t-as-reg-quant-Q1}, 
\ref{thm-t-as-reg-quant-Q1s} hold with 
\begin{align*}
\chi(k) &= \max\lbr\ratetwod(6\Kp(k+1)-1), \rateonea(6\Kp(k+1)-1),\rateoneb(6\Kp(k+1)-1)\rbr, \\
\Psi(k) &= \max \lbr \Phi(3k+2), \ratetwob\lp 6\Kp(k+1) - 1 \rp\rbr, \\
\Psi^*(k) & = \max \lbr \Phi^*(3k+2), \ratetwob\lp 6\Kp(k+1) - 1 \rp\rbr.
\end{align*}
Obviously, in Theorem  \ref{main-thm-t-as-reg-qualitative}, the hypotheses that the series  $\sum\limits_{n=0}^{\infty} \norm{r_n - r_{n+1}}$, 
$\sum\limits_{n=0}^{\infty} \norm{r_n}$ converge can be removed, as they are superfluous.

If we assume, moreover, that $\alpha_n + \beta_n + \delta_n = 1$ for all $n \in \N$, then (Q0) holds with 
$\Mabd=0$, so $\Kp = \ceil{2\Kpz}+1$, and we can also omit the 
hypothesis that $\sum\limits_{n=0}^\infty \lp 1 - \alpha_n - \beta_n - \delta_n \rp$ converges in 
Theorem  \ref{main-thm-t-as-reg-qualitative}.

\subsection{Sequential averaging method (SAM)}\label{subsec-VAM}

Assume that for all $n \in \N$, $\alpha_n =0$, $\beta_n =1- \delta_n$, and $r_n=0$. Then
\[\xnp = \delta_n f(x_n) +  (1-\delta_n) T\xn \quad \text{for all } n \in \N.\]
Thus,  $(x_n)$ is the sequential averaging method (SAM). We get the following quantitative 
asymptotic regularity  result for SAM. 

\bprop\label{SAM-quant-t-as-reg}
Assume that (Q2$\delta_n$) and (Q3$\delta_n$) are satisfied, and that either (Q1$\delta_n$) or (Q1$^*\delta_n$) holds. 
Let, for some $p\in \operatorname{Fix}(T)$, 
\begin{center}
$\Kpz = \max \lbr \norm{x_0 - p}, \frac{\norm{f(p) - p}}{1 - \rho}, \norm{p} \rbr$, \quad
$\Kp = \ceil{2\Kpz}+1$.
\end{center}
Define $\chi: \N \to \N, \,\,  \chi(k) = \ratetwod(6\Kp(k+1)-1)$. 

Then $(\xn)$ is asymptotically regular with rate
\begin{align*}
\Phi_0(k) = \begin{cases} 
\rateoned^+\lp \ceil{\frac{\chi(2k+1)+1+ \ceil{\ln(4\Kp(k+1))}}{1-\rho}}+1\rp & \text{if (Q1$\delta_n$) holds},\\[2mm]
\rateproddelta \lp\chi(2k+1)+1, 4\Kp(k+1)-1 \rp + 1 & \text{if (Q1$^*\delta_n$) holds},
\end{cases}
\end{align*}
and $T$-asymptotically regular with rate
\[
\Psi_0(k)= \max \lbr \Phi_0(3k+2), \ratethreed\lp 6\Kp (k+1) - 1 \rp\rbr.
\]
\eprop
\begin{proof}
Remark that $\Mabd=\Mr=0$, so $\Kp = \ceil{2\Kpz}+1$. Furthermore, (Q1$\alpha_n$), (Q1$r_n$), (Q2$r_n$) hold with  
$\rateonea(k)=\rateoner(k)= \ratetwor(k)=0$ for all $k \in \N$,
(Q1$\beta_n$) is (Q2$\delta_n$) (so $\rateoneb=\ratetwod$), and (Q2$\beta_n$)  is (Q3$\delta_n$) (so $\ratetwob=\ratethreed$). 
Apply now Theorems \ref{thm-t-as-reg-quant-Q1}, \ref{thm-t-as-reg-quant-Q1s}. 
\end{proof}

The corresponding qualitative asymptotic regularity result is obtained as an immediate consequence.

\bprop\label{SAM-t-as-reg}
Assume that $\sum\limits_{n=0}^{\infty} \abs{\delta_n-\delta_{n+1}}< \infty$ and $\lim\limits_{n\to\infty} \delta_n=0$. Suppose, furthermore, that 
\begin{center}
either  $\sum\limits_{n=0}^\infty \delta_n =\infty $ or $\lp \text{for all }m\in \N, \, 
\prod\limits_{i=m}^{\infty}(1-(1-\rho)\delta_{i+1})=0\rp$ holds.
\end{center}
Then $(x_n)$ is asymptotically regular and $T$-asymptotically regular. 
\eprop

If we assume further that $\rho =0$ and that $f(x)=u\in X$ for all $x\in X$, $(x_n)$ reduces to the well-known Halpern iteration. 
Then Proposition \ref{SAM-quant-t-as-reg} holds for the Halpern iteration if we make the following changes: 
take 
\[\Kpz=\max \lbr \norm{x_0 - p}, \norm{u - p}, \norm{p} \rbr\]
 and 
 \[\Phi_0(k)=\rateoned^+\lp \chi(2k+1)+\ceil{\ln(4\Kp(k+1))}+2\rp\] 
in the case that (Q1$\delta_n$) holds.

\section{Examples}\label{section-examples}

In this section, we present three examples of concrete parameter sequences $(\alpha_n)$, $(\beta_n)$, 
$(\delta_n)$, and $(r_n)$. We show that the first two examples satisfy the hypotheses of our 
main quantitative theorems, and we apply these theorems to compute rates of ($T$-)asymptotic regularity 
for the (inexact) generalized Halpern iteration $(x_n)$. In the first example, 
$\alpha_n + \beta_n + \delta_n = 1$ for all $n\in \N$, and in the second one $\alpha_n + \beta_n + \delta_n < 1$ 
for all $n\in \N$. 
Applying Theorem \ref{thm-t-as-reg-quant-Q1} yields exponential rates, while Theorem \ref{thm-t-as-reg-quant-Q1s}
provides improved polynomial (quadratic) rates. This confirms an observation first made by 
Kohlenbach \cite{Koh11} for the Halpern iteration in normed spaces. 
The third example allows us to apply a well-known lemma due to Sabach and Shtern \cite{SabSht17}, 
and to obtain, as a consequence, linear rates.

\subsection{Example 1}\label{example1}

Assume that for all $n \in \N$,
\[
\alpha_n = \frac{\lambda}{n + J}, \quad \beta_n = 1 - \lp \lambda +  1 \rp \frac{1}{n + J}, \quad 
\delta_n = \frac1{n + J}, \quad r_n = \frac{1}{(n + P)^2} r^*,
\]
where $J,P\in \Ns$, $\lambda \in \lb 0, J - 1 \rb$, and $r^* \in X$. Obviously, for all 
$n \in \N$, $\alpha_n, \beta_n \in \lb 0, 1 \rp$, $\delta_n \in \lp 0, 1 \rb$, and 
$\alpha_n + \beta_n + \delta_n = 1$. 

\blem\label{example1-quant-hyp-1}
\be
\item\label{example1-quant-hyp-1-Q0}  (Q0) holds with $\Mabd=0$.
\item\label{example1-quant-hyp-1-Q2d} (Q2$\delta_n$) holds with $\ratetwod(k)=k+1$.
\item\label{example1-quant-hyp-1-Q1a} (Q1$\alpha_n$) holds with $\rateonea(k) = \ceil{\lambda(k+1)}$.
\item\label{example1-quant-hyp-1-Q1b} (Q1$\beta_n$) holds with $\rateoneb(k)= \ceil{\lp\lambda +  1\rp(k+1)}$.
\item\label{example1-quant-hyp-1-Q2b} (Q2$\beta_n$) holds with $\ratetwob(k)=\ceil{\lp\lambda +  1\rp(k+1)}$.  
\item\label{example1-quant-hyp-1-Q1r} (Q1$r_n$) holds with $\rateoner(k)= \ceil{\sqrt{\norm{r^*}(k+1)}}$.
\item\label{example1-quant-hyp-1-Q2r} (Q2$r_n$) holds with $\ratetwor(k) = \ceil{\norm{r^*}(k+1)}$.
\ee
\elem
\begin{proof}
\eqref{example1-quant-hyp-1-Q0} is obvious, as $1 - \alpha_n - \beta_n - \delta_n =0$ for all $n\in \N$. 
As $\lp \alpha_n\rp$ and  $\lp \delta_n\rp$ are decreasing, and 
$\lp \beta_n\rp$ is increasing, we have that for all $n\in \N$, $l\in \N^*$,
\begin{align*}
\sum\limits_{i= n+1}^{n+l} \abs{\delta_i - \delta_{i+1}} & = 
\sum\limits_{i= n+1}^{n+l}\lp \delta_i - \delta_{i+1}\rp  = \delta_{n+1} - \delta_{n+l+1} < \delta_{n+1} = \frac{1}{n + J+1}, \\
\sum\limits_{i= n+1}^{n+l} \abs{\alpha_i - \alpha_{i+1}}  & = 
\sum\limits_{i= n+1}^{n+l}\lp \alpha_i - \alpha_{i+1}\rp = \alpha_{n+1} - \alpha_{n+l+1} < \alpha_{n+1} = \frac{\lambda}{n+J+1}, \\
\sum\limits_{i= n+1}^{n+l} \abs{\beta_i - \beta_{i+1}} & = 
\sum\limits_{i= n+1}^{n+l}\lp \beta_{i+1} -\beta_i \rp = \beta_{n+l+1} - \beta_{n+1} 
 < \lp \lambda +  1 \rp \frac1{n + J+1}, \\
1 - \beta_n & =  \lp \lambda +  1 \rp \frac1{n + J}.
\end{align*}
Apply now Lemma \ref{useful-Cauchy-rate}.\eqref{useful-Cauchy-rate-2} to get that \eqref{example1-quant-hyp-1-Q2d}, 
\eqref{example1-quant-hyp-1-Q1a},  \eqref{example1-quant-hyp-1-Q1b}, and  \eqref{example1-quant-hyp-1-Q2b} hold. 

Furthermore, \eqref{example1-quant-hyp-1-Q1r} is obtained by an application  of Lemma \ref{useful-Cauchy-rate}.\eqref{useful-Cauchy-rate-3}, as 
for all $n\in \N$, $l\in \N^*$, 
\begin{align*}
\sum\limits_{i= n+1}^{n+l}\norm{r_i - r_{i+1}}  & = 
\sum\limits_{i= n+1}^{n+l} \lp \frac{\norm{r^*}}{(i + P)^2}  -\frac{\norm{r^*}}{(i + P+1)^2} \rp 
 = \frac{\norm{r^*}}{(n+ P+1)^2}  - \frac{\norm{r^*}}{(n+ P+l)^2} \\
 & \leq \frac{\norm{r^*}}{(n+ P+1)^2}.
\end{align*}
Finally, \eqref{example1-quant-hyp-1-Q2r} holds,  by Lemma \ref{useful-Cauchy-rate}.\eqref{useful-Cauchy-rate-1}.
\end{proof}

\blem\label{example1-quant-hyp-2}
\be
\item\label{example1-quant-hyp-2-1} (Q1$\delta_n$) holds with $\rateoned(n) = \ceil{J \exp(n)}-J$.
\item\label{example1-quant-hyp-2-2} (Q1$^*\delta_n$) holds with 
$\rateproddelta(m,k) = \ceil{(m + J+1) (k+1)^{\frac{1}{1 - \rho}}}-J-1$.
\ee
\elem
\begin{proof}
\be
\item Let $n \in \N$.  Remark first that $\rateoned(n) \geq 0$. Moreover, 
\begin{align*}
\sum\limits_{i = 0}^{\rateoned(n)} \frac{1}{i + J} & = \sum\limits_{m = J}^{\rateoned(n) + J} \frac{1}{m} 
\geq \ln(\rateoned(n) + J + 1) - \ln J \geq \ln \frac{\ceil{J \exp(n)}}{J} 
\geq  \ln \lp\exp(n)\rp = n. 
\end{align*}
\item Let $k, m \in \N$. As $\rho \in [0,1)$, we have that $1 - \rho \in (0,1]$, and hence 
$(k + 1)^{\frac{1}{1 - \rho}} \geq k+1\geq 1$, so $ \rateproddelta(m,k) \geq m$. 
Denote
\[
A := \prod\limits_{i = m}^{\rateproddelta(m,k)} (1 - (1 - \rho) \delta_{i+1}) = 
\prod\limits_{i = m}^{\rateproddelta(m,k)} \lp 1 - \frac{1 - \rho}{i + J+1} \rp, 
\text{ and } A_1 := \sum\limits_{i = m}^{\rateproddelta(m,k)} \frac{1}{i + J+1}.
\]
As $\ln x \leq x - 1$ for $x>0$, we get  that 
\begin{align*}
\ln A & = \sum\limits_{i = m}^{\rateproddelta(m,k)} \ln \lp 1 - \frac{1 - \rho}{i + J+1} \rp 
\leq \sum\limits_{i = m}^{\rateproddelta(m,k)} - \frac{1 - \rho}{i + J+1} = - (1-\rho) A_1.
\end{align*}
Since
\begin{align*}
A_1 & = 
\sum\limits_{i = m + J+1}^{\rateproddelta(m,k) + J+1} \frac{1}{i} 
\geq \ln \lp \rateproddelta(m,k) + J + 2\rp - \ln \lp m + J+1\rp   = \ln \frac{\rateproddelta(m,k) + J + 2}{m + J+1} \\
&  \geq\ln (k+1)^{\frac{1}{1 - \rho}}  = \frac{1}{1 - \rho}\ln (k+1), 
\end{align*}
it follows that $\ln A \leq \ln \frac1{k+1}$. Thus, $A \leq  \frac1{k+1}$. 
\ee
\end{proof}

By Lemma \ref{example1-quant-hyp-1}.\eqref{example1-quant-hyp-1-Q0}, $\Mabd=0$, and, by the definition of $\Mr$ and
Lemma \ref{useful-Cauchy-rate}.\eqref{useful-Cauchy-rate-1u}, we can take $\Mr = 2\ceil{\norm{r^*}}$. Then 
\begin{equation} \label{def-ex1-Kp}
\Kp \stackrel{\eqref{def-Kp}}{=}\ceil{(2 + \Mabd) \Kpz} + \Mr+1=\ceil{2\Kpz} + 2\ceil{\norm{r^*}}+1. 
\end{equation}

Applying Theorem \ref{thm-t-as-reg-quant-Q1} and using Lemma \ref{example1-quant-hyp-1}, and
Lemma \ref{example1-quant-hyp-2}.\eqref{example1-quant-hyp-2-1}, we get exponential rates of 
($T$-)asymptotic regularity of $(x_n)$.

\bprop \label{prop1-ex1-1}
Let 
\begin{equation} \label{def-ex1-chi}
\chi(k) = \max\lbr \ceil{6\lambda\Kp(k+1)} +  6\Kp(k+1), \ceil{\sqrt{2\norm{r^*}(k+1)}}\rbr,
\end{equation}
and define
\begin{align*}
\Phi(k) & = \ceil{J \exp \lp \ceil{\frac{\chi(2k+1)+1+ \ceil{\ln(4\Kp(k+1))}}{1-\rho}}+ 1 \rp}- J, \\
\Psi(k) & =  \max \lbr \Phi(3k+2),  \ceil{3\norm{r^*}(k+1)}+1 \rbr.
\end{align*}
Then $\Phi$ is a rate of asymptotic regularity and  $\Psi$ is a rate of $T$-asymptotic regularity of $(x_n)$. 
\eprop

However, if we apply instead Theorem \ref{thm-t-as-reg-quant-Q1s} and use 
Lemma \ref{example1-quant-hyp-2}.\eqref{example1-quant-hyp-2-2}, we get better rates than 
the ones from  Proposition \ref{prop1-ex1-1}. 

\bprop \label{prop1-ex1-2}
Define
\begin{align*}
\Phi^*(k) & = \ceil{(\chi(2k+1)+ J+2) \lp (4\Kp(k+1))^{\frac{1}{1 - \rho}}\rp} - J, \\
\Psi^*(k) & = \max\lbr \Phi^*(3k+2),  \ceil{3\norm{r^*}(k+1)} +1 \rbr.
\end{align*}
where $\chi$ is given by \eqref{def-ex1-chi}.
Then $\Phi^*$ is a rate of asymptotic regularity and  $\Psi^*$ is a rate of $T$-asymptotic regularity of $(x_n)$. 
\eprop

Assume that $\rho = 0$ and let $f(x)=u\in X$ for all $x\in X$. Then, by \eqref{def-ex1-Kp}, 
\[
\Kp = \ceil{2\Kpz} + 2\ceil{\norm{r^*}}+1, \quad \text{with }  
\Kpz=\max \lbr \norm{x_0 - p}, \norm{u - p}, \norm{p} \rbr,
\]
and, by Proposition \ref{prop1-ex1-2}, we get quadratic rates of ($T$-)asymptotic regularity 
of the Kanzow--Shehu iteration \eqref{KanShe-iteration}.

The rates can be further simplified if we additionally assume that $r^* = 0$; in this case $(x_n)$ 
becomes the version without error terms of \eqref{KanShe-iteration}, a particular case of the generalized Halpern iteration.

\begin{corollary}\label{cor-ex1-rrhozero}
Suppose that $r^* = 0$, $\rho = 0$, and $f(x)=u\in X$ for all $x\in X$.  Define
\begin{align*}
\tilde{\Phi}(k) & = 4\Kp \ceil{12 \lambda \Kp (k+1)}(k+1) + 48\Kp^2(k+1)^2 + 4\Kp(J+2)(k+1) - J, \\
\tilde{\Psi}(k) & = \tilde{\Phi}(3k+2),
\end{align*}
where $\Kp = \ceil{2\Kpz}+1$, with $\Kpz=\max \lbr \norm{x_0 - p}, \norm{u - p}, \norm{p} \rbr$.

Then $\tilde{\Phi}$ is a rate of asymptotic regularity and  $\tilde{\Psi}$ is a 
rate of $T$-asymptotic regularity of $(x_n)$.
\end{corollary}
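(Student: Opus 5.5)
The plan is to specialize Proposition \ref{prop1-ex1-2} and then simplify. With $r^*=0$ we have $\norm{r^*}=0$, so $\Mr=2\ceil{\norm{r^*}}=0$. By \eqref{def-ex1-Kp}, $\Kp=\ceil{2\Kpz}+1$, and since $\rho=0$ and $f\equiv u$, $\Kpz=\max\lbr\norm{x_0-p},\norm{u-p},\norm{p}\rbr$. In \eqref{def-ex1-chi} the second entry becomes $\ceil{\sqrt{0}}=0$. Hence
\[
\chi(k)=\ceil{6\lambda\Kp(k+1)}+6\Kp(k+1),
\]
and therefore
\[
\chi(2k+1)=\ceil{12\lambda\Kp(k+1)}+12\Kp(k+1).
\]

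Next, with $\rho=0$ the exponent $\frac{1}{1-\rho}$ equals $1$. So $\Phi^*(k)=\ceil{(\chi(2k+1)+J+2)\cdot 4\Kp(k+1)}-J$. The quantity inside the ceiling is a product of natural numbers, so the ceiling can be dropped. Expanding gives
\[
\Phi^*(k)=4\Kp\ceil{12\lambda\Kp(k+1)}(k+1)+48\Kp^2(k+1)^2+4\Kp(J+2)(k+1)-J=\tilde{\Phi}(k).
\]
This shows $\tilde{\Phi}$ is a rate of asymptotic regularity.

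For $T$-asymptotic regularity, Proposition \ref{prop1-ex1-2} gives $\Psi^*(k)=\max\lbr\Phi^*(3k+2),\ceil{0}+1\rbr=\max\lbr\tilde{\Phi}(3k+2),1\rbr$. I expect this to be the only step needing care: showing $\tilde{\Phi}(3k+2)\geq 1$, so that the maximum equals $\tilde{\Phi}(3k+2)$. This holds because $\Kp\geq 1$ and $k+1\geq 1$, so $4\Kp(J+2)(k+1)-J\geq 4(J+2)-J>1$, and the other terms are nonnegative.

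Alternatively, one can avoid this check. Since $r_n=0$, we may use the generalized Halpern case, where $\ratetwor=0$ is allowed. In that case the formula for $\Psi^*$ contains no $\ratetwor$ term, and $\ratetwob(6\Kp(k+1)-1)=\ceil{6(\lambda+1)\Kp(k+1)}$ is clearly dominated by $\tilde{\Phi}(3k+2)$. Indeed, one of the terms of $\tilde{\Phi}(3k+2)$ is $4\Kp\ceil{12\lambda\Kp(3k+3)}(3k+3)\geq 6\lambda\Kp(k+1)$. Another term, $48\Kp^2(3k+3)^2$, is at least $6\Kp(k+1)+1$.

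Either route gives $\tilde{\Psi}(k)=\tilde{\Phi}(3k+2)$ as a rate of $T$-asymptotic regularity.
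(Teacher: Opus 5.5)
Your proposal is correct and matches the paper's proof, which simply specializes Proposition \ref{prop1-ex1-2} using \eqref{def-ex1-Kp}. You also spell out the details the paper leaves implicit: dropping the ceiling because the product is an integer, and checking $\tilde{\Phi}(3k+2)\geq 1$ so that $\max\lbr \tilde{\Phi}(3k+2),1\rbr=\tilde{\Phi}(3k+2)$.
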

\begin{proof}
Apply \eqref{def-ex1-Kp} and Proposition \ref{prop1-ex1-2}.
\end{proof}

\subsection{Example 2}\label{example2}

Let $J\in \N\setminus \lbr 0,1,2 \rbr$, $P \in \Ns$, $r^* \in X$, $\lp\delta_n\rp$ and $\lp r_n\rp$ be defined as in 
Example 1, and, for all $n \in \N$,
\[
\alpha_n = \frac1{n + J}, \qquad \beta_n = 1 - \frac2{n + J} - \frac1{(n + J)^2}.
\]

We have that for all $n\in \N$, $\alpha_n, \beta_n \in \lp 0, 1 \rp$ and  
\begin{equation}\label{unu-sumabd-2}
1- (\alpha_n + \beta_n + \delta_n) = \frac{1}{(n + J)^2} >0.
\end{equation} 

Apply  \eqref{unu-sumabd-2} and Lemma \ref{useful-Cauchy-rate}.\eqref{useful-Cauchy-rate-1u} to  obtain that (Q0) holds with $\Mabd=2$.
By Lemma \ref{example1-quant-hyp-1}, we have that (Q2$\delta_n$) holds with $\ratetwod(k)=k+1$, 
(Q1$r_n$) holds with $\rateoner(k)= \ceil{\sqrt{\norm{r^*}(k+1)}}$, and (Q2$r_n$) holds with $\ratetwor(k) = \ceil{\norm{r^*}(k+1)}$.
Since $\alpha_n=\delta_n$ for all $n\in \N$, (Q1$\alpha_n$) holds with $\rateonea(k) =  k+1$.

As $\lp \beta_n\rp$ is increasing, we have that for all $n\in \N$, $l\in \N^*$,
\begin{align*}
\sum\limits_{i= n+1}^{n+l} \abs{\beta_i - \beta_{i+1}} & = 
\sum\limits_{i= n+1}^{n+l}\lp \beta_{i+1} -\beta_i \rp = \beta_{n+l+1} - \beta_{n+1} <
\frac2{n+J+1} + \frac1{(n + J+1)^2}  < \frac3{n + J+1}, \\
1 - \beta_n & =  \frac2{n + J} + \frac1{(n + J)^2} < \frac3{n + J}.
\end{align*}
Thus, we can apply Lemma \ref{useful-Cauchy-rate}.\eqref{useful-Cauchy-rate-2} to obtain that 
(Q1$\beta_n$) and (Q2$\beta_n$) hold with $\rateoneb(k)=\ratetwob(k)= 3(k+1)$. \\

Applying Theorem \ref{thm-t-as-reg-quant-Q1} and using 
Lemma \ref{example1-quant-hyp-2}.\eqref{example1-quant-hyp-2-1} we get exponential rates of
($T$-)asymptotic regularity of $(x_n)$.

\bprop \label{prop1-ex2-1}
Define $\Phi$, $\Psi$ as in Proposition \ref{prop1-ex1-1}, with 
\begin{align*} 
\Kp   = \ceil{4\Kpz} + 2\ceil{\norm{r^*}} + 1, \qquad 
\chi(k)  = \max\lbr 12\Kp(k+1),\ceil{\sqrt{2\norm{r^*}(k+1)}}\rbr.
\end{align*}
Then $\Phi$ is a rate of asymptotic regularity and  $\Psi$ is a rate of $T$-asymptotic regularity of $(x_n)$. 
\eprop

As in Example 1, we get better rates of ($T$-)asymptotic regularity of $(x_n)$ if we apply instead 
Theorem \ref{thm-t-as-reg-quant-Q1s} and Lemma \ref{example1-quant-hyp-2}.\eqref{example1-quant-hyp-2-2}. 

\bprop \label{prop1-ex2-2}
Define $\Phi^*$, $\Psi^*$ as in Proposition \ref{prop1-ex1-2}, with $\Kp$, $\chi$ as in Proposition \ref{prop1-ex2-1}. 
Then $\Phi^*$ is a rate of asymptotic regularity and  $\Psi^*$ is a rate of $T$-asymptotic regularity of $(x_n)$. 
\eprop

Applying  Proposition \ref{prop1-ex2-2}, we get a result similar to Corollary \ref{cor-ex1-rrhozero}. 

\begin{corollary}\label{cor-ex2-rrhozero}
Suppose that $r^* = 0$, $\rho = 0$, and $f(x)=u\in X$ for all $x\in X$.  Define
\begin{align*}
\tilde{\Phi}(k)  = 144\Kp^2(k+1)^2 +  4(J+2)\Kp(k+1) -J, \qquad 
\tilde{\Psi}(k)  = \tilde{\Phi}(3k+2),
\end{align*}
where $\Kp = \ceil{4\Kpz}+1$, with $\Kpz=\max \lbr \norm{x_0 - p}, \norm{u - p}, \norm{p} \rbr$.

Then $\tilde{\Phi}$ is a rate of asymptotic regularity and  $\tilde{\Psi}$ is a 
rate of $T$-asymptotic regularity of $(x_n)$.
\end{corollary}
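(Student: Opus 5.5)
The plan is to specialize Proposition \ref{prop1-ex2-2} to $r^*=0$ and $\rho=0$, simplify the resulting expressions, and use that any function dominating a rate of convergence is again a rate of convergence.

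First, I would fix the constants. For $r^*=0$ we have $r_n=0$ for all $n$, so by the convention defining $\Mr$ we may take $\Mr=0$. Example 2 gives (Q0) with $\Mabd=2$. Hence, by \eqref{def-Kp}, $\Kp=\ceil{4\Kpz}+1$. For $f\equiv u$ and $\rho=0$, \eqref{def-kzp-knp} reduces to $\Kpz=\max\lbr\norm{x_0-p},\norm{u-p},\norm{p}\rbr$, as noted in the Kanzow--Shehu subsection. With $r^*=0$, the function $\chi$ from Proposition \ref{prop1-ex2-1} becomes $\chi(k)=\max\lbr 12\Kp(k+1),0\rbr=12\Kp(k+1)$.

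Second, I would compute $\Phi^*$ from Proposition \ref{prop1-ex1-2}. Since $\rho=0$, the exponent $\frac{1}{1-\rho}$ equals $1$, and everything inside the ceiling is an integer. This gives
\[
\Phi^*(k)=(\chi(2k+1)+J+2)\cdot 4\Kp(k+1)-J=(24\Kp(k+1)+J+2)\,4\Kp(k+1)-J .
\]
Expanding, $\Phi^*(k)=96\Kp^2(k+1)^2+4(J+2)\Kp(k+1)-J$, which is at most $\tilde{\Phi}(k)$. Since $\Phi^*$ is a rate of asymptotic regularity, the definition of a rate of convergence immediately shows that any pointwise larger function is also one. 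Therefore $\tilde{\Phi}$ is a rate of asymptotic regularity.

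Third, I would treat $\Psi^*$ in the same way. With $r^*=0$, the second term is $\ceil{0}+1=1$. Also $\Phi^*(3k+2)\geq 1$, because $96\Kp^2\geq J$ can fail only in degenerate cases, and in any event $4(J+2)\Kp(k+1)-J\geq J+8>1$. Hence $\Psi^*(k)=\Phi^*(3k+2)\leq\tilde{\Phi}(3k+2)=\tilde{\Psi}(k)$, and monotonicity again gives the claim.

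I do not expect a real obstacle. The only points needing care are bookkeeping: checking that $\Mr=0$ is legitimate, that the ceilings disappear because all quantities are integers when $\rho=0$, and that the stated constant $144$ dominates the constant $96$ actually obtained.
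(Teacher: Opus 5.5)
Your argument is correct and is the paper's own route: specialize Proposition \ref{prop1-ex2-2} to $r^*=0$, $\rho=0$, then use that a pointwise larger function is again a rate. Your check that $\Psi^*(k)=\max\lbr \Phi^*(3k+2),1\rbr=\Phi^*(3k+2)$ is also fine.

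Note, however, that the gap between your $96$ and the stated $144$ is not slack in the corollary but an inconsistency in Proposition \ref{prop1-ex2-1}. Inserting $\rateoneb(k)=3(k+1)$ into \eqref{thm-t-as-reg-quant-def-chi} gives $\rateoneb(6\Kp(k+1)-1)=18\Kp(k+1)$, so $\chi(2k+1)=36\Kp(k+1)$ and $\Phi^*=\tilde{\Phi}$ exactly. The printed $12\Kp(k+1)$ is not what that formula yields, though it can be justified directly from $\sum_{i=n+1}^{n+l}d_i<\frac{5}{n+J+1}$; your domination step makes the proof valid under either reading.
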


\subsection{Example 3} \label{example3}

Sabach and Shtern proved a lemma on sequences of real numbers \cite[Lemma 3]{SabSht17} and,  
using this lemma, they computed linear rates of ($T$-)asymptotic regularity for SAM. 
The lemma has subsequently been used to derive such linear rates for several Halpern-type iterations 
\cite{CheKohLeu23,LeuPin24,CheLeu25,FirLeu25}. In the sequel, we apply this lemma to derive linear rates of ($T$-)asymptotic regularity 
for the generalized Halpern iteration. To this end, we use the following slightly modified version of Sabach and Shtern's lemma.

\begin{lemma}\label{lemma-SabSht-version}\cite[Lemma 2.8]{LeuPin24} \, \\
Let $L>0$,  $J\geq N\geq 2$,  and $\gamma\in(0,1]$. Assume that $a_n=\frac{N}{\gamma(n+J)}$  and $c_n\leq L$ for all $n\in\N$.
Consider a sequence of nonnegative real numbers $(s_n)$ satisfying the following:
$s_0 \leq L$ and,  for all $n\in\N$,
\[
s_{n+1} \leq (1 - \gamma a_{n+1})s_n + (a_n-a_{n+1})c_n.
\]
Then 
\[
s_n\leq \frac{JL}{\gamma(n+J)} \quad \text{for all~} n\in \N.
\]
\end{lemma}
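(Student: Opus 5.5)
We prove the bound $s_n\leq \frac{JL}{\gamma(n+J)}$ by induction on $n$. The base case is immediate: $s_0\leq L=\frac{JL}{\gamma J}\cdot\gamma\leq \frac{JL}{\gamma J}$, since $\gamma\leq 1$.

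For the inductive step, assume $s_n\leq \frac{JL}{\gamma(n+J)}$. Write $m=n+J$ for brevity. We have
\[
\gamma a_{n+1}=\frac{N}{m+1}, \qquad a_n-a_{n+1}=\frac{N}{\gamma}\Bigl(\frac1m-\frac1{m+1}\Bigr)=\frac{N}{\gamma m(m+1)}.
\]
Since $J\geq N$, we get $m+1>N$, so $1-\gamma a_{n+1}\in[0,1)$ is nonnegative. This nonnegativity is what lets us substitute the inductive bound on $s_n$, together with $c_n\leq L$ and $a_n-a_{n+1}\geq 0$. The result is
\[
s_{n+1}\leq \frac{m+1-N}{m+1}\cdot\frac{JL}{\gamma m}+\frac{NL}{\gamma m(m+1)}=\frac{L}{\gamma m(m+1)}\bigl(J(m+1-N)+N\bigr).
\]

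It remains to show that this is at most $\frac{JL}{\gamma(m+1)}$, i.e. that $J(m+1-N)+N\leq Jm$. This is equivalent to $J-JN+N\leq 0$, i.e. $J(N-1)\geq N$. That holds because $N\geq 2$ gives $N-1\geq 1$, hence $J(N-1)\geq J\geq N$. This closes the induction.

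The main point requiring care is keeping track of signs: the factor $1-\gamma a_{n+1}$ must be nonnegative before the inductive hypothesis can be used, and $a_n$ must be decreasing so the second term can be bounded using $c_n\leq L$. Both follow from the hypotheses $J\geq N$ and the explicit form $a_n=\frac{N}{\gamma(n+J)}$. The final algebraic inequality is exactly where $N\geq 2$ and $J\geq N$ are used.
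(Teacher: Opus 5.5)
Your induction is correct: $1-\gamma a_{n+1}=\frac{n+J+1-N}{n+J+1}$ is positive because $J\geq N$, $a_n-a_{n+1}>0$ lets you apply $c_n\leq L$, and the final inequality reduces to $J(N-1)\geq N$, which follows from $N\geq 2$ and $J\geq N$. The paper does not prove this lemma itself; it cites Lemma 2.8 of Leu\c{s}tean--Pinto, a reindexed version of Sabach and Shtern's Lemma 3, and your direct induction is presumably the same standard argument.
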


Let $J, P \in \N$ be such that $P \geq J>\frac{3-\rho}{1-\rho}$, $r^* \in X$, 
$(r_n)$ be defined as in Example 1, and for all $n \in \N$,
\[
\alpha_n = \frac{1}{n + J}, \quad \beta_n = 1 - \frac{3 - \rho}{(1 - \rho) (n+J)}, \quad 
\delta_n = \frac{2}{(1-\rho)(n + J)}.
\]
Obviously, for all $n\in \N$, $\alpha_n,\beta_n, \delta_n \in \lp 0, 1 \rp$, and 
$\alpha_n + \beta_n + \delta_n=1$. Moreover, $(\alpha_n)$ and $(\delta_n)$ are decreasing, and $(\beta_n)$ is increasing. 

Let $L\in \Ns$ be such that 
\begin{equation}\label{ex3-def-L}
L \geq \max \lbr \norm{x_1 - x_0}, \Kp (3 - \rho) + \frac{(1-\rho) \norm{r^*}}{2} \frac{2P+1}{P(P+1)} \rbr.
\end{equation}

\bprop \label{SS-applied-to-ex3}
For all $n \in \N$,
\begin{align}
\norm{x_{n+1} - x_n} &\leq \frac{JL}{(1-\rho)(n+J)}, \label{xnp-xn-inq-SS} \\
\norm{Tx_n - x_n} &\leq \frac{(J+2)L}{(1-\rho)(n+J)}. \label{Txn-xn-inq-SS}
\end{align}
Thus $(x_n)$ is asymptotically regular with rate $\Phi(k)=\ceil{\frac{JL}{1-\rho}(k+1)}$ and 
$T$-asymptotically regular with rate $\Psi(k)=\ceil{\frac{(2+J)L}{1-\rho}(k+1)}$.
\eprop
\begin{proof}
By \eqref{xnpuxn-ineq-main}, we have that
\begin{align}\label{ineq1-ex3}
\norm{x_{n+2} - x_{n+1}} & \leq (1-(1-\rho)\delta_{n+1})\norm{x_{n+1} - x_n} + \Kp d_n 
 + \norm{r_{n+1}-r_n},
\end{align}
where $d_n =  |\delta_{n+1}-\delta_n| +  |\alpha_{n+1}-\alpha_n| + |\beta_{n+1}-\beta_n|$.
As $\alpha_n-\alpha_{n+1}  = \frac{1}{(n+J)(n+J+1)}$, $\beta_{n+1} - \beta_n  =  \frac{3 - \rho}{1 - \rho}(\alpha_n - \alpha_{n+1})$, and 
$\delta_n-\delta_{n+1}  = \frac{2}{1-\rho}(\alpha_n - \alpha_{n+1})$, 
we get that 
\[d_n  =  (\delta_n - \delta_{n+1}) + (\alpha_n - \alpha_{n+1}) + (\beta_{n+1} - \beta_n)
= (3-\rho)(\delta_n - \delta_{n+1}).\]
One can easily see that 
\begin{align*}
\norm{r_{n+1} - r_n} & = 
(\delta_n - \delta_{n+1}) \frac{(1-\rho)\norm{r^*}(n+J)(n+J+1)(2n + 2P + 1)}{2(n + P)^2(n + P+1)^2}.
\end{align*}
Using \eqref{ineq1-ex3}, we obtain
\begin{align}\label{ineq2-ex3}
\norm{x_{n+2} - x_{n+1}} & \leq (1-(1-\rho)\delta_{n+1})\norm{x_{n+1} - x_n} + (\delta_n - \delta_{n+1}) \theta_n,
\end{align}
where 
\begin{align*}
\theta_n & = (3-\rho)\Kp + \frac{(1-\rho)\norm{r^*}(n+J)(n+J+1)(2n + 2P + 1)}{2(n + P)^2(n + P+1)^2}.
\end{align*}
Since $P\geq J$, we get that 	
\begin{align*}
\frac{(2n + 2P + 1)(n+J)(n+J+1)}{2(n + P)^2(n + P+1)^2} & \leq \frac{2n + 2P + 1}{2(n + P)(n + P+1)} 
 = \frac{1}{2(n+P+1)} + \frac{1}{2(n+P)} \\
& \leq \frac{1}{2(P+1)} + \frac{1}{2P} = \frac{2P+1}{2P(P+1)}, 
\end{align*}
Thus, $\theta_n \leq (3-\rho)\Kp + \frac{(1-\rho)\norm{r^*}(2P+1)}{2P(P+1)} \leq L$.

One can easily verify that Lemma \ref{lemma-SabSht-version} can be applied with 
\begin{align*}
s_n := \norm{x_{n+1} - x_n}, \quad a_n := \delta_n, \quad c_n:= \theta_n, \quad \gamma: = 1-\rho, 
\quad N:= 2, \quad J,L \text{ as above}.
\end{align*}
It follows that for all $n\in \N$,
\begin{align*}
\norm{x_{n+1} - x_n} \leq \frac{JL}{\gamma(n+J)} = \frac{JL}{(1-\rho)(n+J)}.
\end{align*}
Thus, \eqref{xnp-xn-inq-SS} holds and, as a consequence, $\Phi$ is a rate of convergence to $0$ of 
$\lp \norm{x_{n+1} - x_n} \rp$.
By \eqref{xnTxn-ineq-main} and \eqref{xnp-xn-inq-SS}, we get that for all $n\in \N$,
\begin{align*}
\norm{Tx_n - x_n } & \leq \norm{x_{n+1}-x_n} + 2 \lp 1-\beta_n\rp \Kp + \norm{r_n} 
\leq \frac{JL}{(1-\rho)(n+J)}  + 2 \lp 1-\beta_n\rp \Kp + \norm{r_n}.
\end{align*}
Remark  that
\begin{align*}
2 \lp 1-\beta_n\rp \Kp + \norm{r_n} & =  2\Kp \frac{3-\rho}{(1-\rho)(n+J)} + \frac{1}{(n+P)^2} \norm{r^*} \\
&  \leq \frac{2}{(1-\rho)(n+J)} \lp \Kp (3-\rho) + \frac{(1-\rho) \norm{r^*}}{2} \frac{1}{n+P} \rp 
\leq  \frac{2L}{(1-\rho)(n+J)}.
\end{align*}
Hence, \eqref{Txn-xn-inq-SS} holds and $\Psi$ is a rate of convergence to $0$ of 
$\lp \norm{Tx_n - x_n} \rp$.
\end{proof}

By letting $\rho=0$ and $f(x)=u\in X$ for all $x\in X$, we obtain the following linear rates of ($T$-)asymptotic regularity  for the 
Kanzow--Shehu iteration \eqref{KanShe-iteration}:
\[ \Phi(k) =JL(k+1), \quad  \Psi(k)=(2+J)L(k+1),\]
where $L\in \Ns$ satisfies $L \geq \max \lbr \norm{x_1 - x_0}, 3\Kp + \frac{\norm{r^*}(2P+1)}{2P(P+1)} \rbr$ and $\Kp$ is 
given by \eqref{def-Kp} with 
\[\Kpz=\max \lbr \norm{x_0 - p}, \norm{u - p}, \norm{p} \rbr.\]
If, moreover, $r^*=0$, then $L$ can be further simplified: $L \geq \max \lbr \norm{x_1 - x_0}, 3\Kp\rbr$.

\end{document}